\documentclass[reqno]{amsproc}

\usepackage{amssymb, amsmath,amsthm,color}
\usepackage{url,hyperref}
\hypersetup{citecolor=blue, linkcolor=blue, colorlinks=true}
\usepackage[marginparwidth=1.5cm,margin=2cm]{geometry}

\theoremstyle{plain}
\newtheorem{theorem}{Theorem}[section]
\newtheorem{lemma}[theorem]{Lemma}
\newtheorem{proposition}[theorem]{Proposition}

\newcommand{\PG}{\mathrm{PG}}

\newcommand{\Tr}{\mathrm{Tr}}

\newcommand\s{\mathcal{S}}

\newcommand\F{\mathbb{F}}
\newcommand\h{\mathsf{H}}

\newcommand\PGammaU{\mathrm{P\Gamma U}}

\title{Every special set of the Hermitian surface $\h(3,q^2)$ is classical}

\author{John Bamberg}
\author{Ethan Kealley}
\address{Department of Mathematics and Statistics, 
The University of Western Australia, 35 Stirling Highway, Perth, W. A. 6009, Australia.}

\date{}

\keywords{Hermitian Veronesean, special set}

\subjclass[2000]{51E12, 05B25, 51E20}

\begin{document}

\begin{abstract}
Special sets of the Hermitian surface $\h(3,q^2)$, $q$ odd,
were introduced by Shult and Thas (1995) in order to construct new finite generalised quadrangles,
yet only one example is known to exist and it gives rise to a classical generalised quadrangle.
We show that there can be no other special sets of the Hermitian surface.
\end{abstract}

\maketitle

\section{Introduction}

Shult and Thas \cite[Section 6]{Shult:1995aa} introduced a method to construct finite generalised quadrangles of
order $(q^2,q^2)$ from certain sets of points of the Hermitian surface $\h(3,q^2)$. 
They were coined \emph{special sets} by Shult \cite{Shult2005}, and he showed that they
have many more connections to other objects in finite geometry than originally anticipated,
such as indicator sets, pseudo-ovals, and locally Hermitian ovoids.
A \emph{special set} of the Hermitian surface $\h(3,q^2)$, $q$ odd, is a set $\mathcal{S}$
of $q^2+1$ singular pairwise non-collinear points such that for any singular point $P$ not in $\mathcal{S}$, there
are 0 or 2 points of $\mathcal{S}$ collinear with $P$. Equivalently, a special set of $\h(3,q^2)$ is a set of $q^2+1$ points such that any three span a nondegenerate plane.
The only known examples are equivalent under the collineation group of $\h(3,q^2)$ to the \emph{Hermitian Veronesean} $\mathcal{V}$:
\[
\mathcal{V}:=\{ (1,x,x^q,x^{q+1}):x\in \F_{q^2}\}\cup\{(0,0,0,1)\}.
\]
Here, we have taken $\h(3,q^2)$ to be defined by the form 
\[
    h(X,Y):=X_0Y_3^q+X_3Y_0^q-X_1Y_1^q-X_2Y_2^q.
\]
We will refer to $\mathcal{V}$ as the \emph{classical} special set.
It has been a long-standing open problem in finite geometry whether or not every special set of $\h(3,q^2)$ is classical.
It was conjectured in \cite[Conjecture 2.1]{CossidenteSiciliano} that every special set is classical, 
and it was listed as `Open problem 6' in the recent survey article \cite{HirschfeldThas}.
Special sets were also studied in \cite{CMP2006, CossidentePenttila}.

There have been characterisations of the classical special set that use the notion of 
three elements (a \emph{triangle}) being \emph{in perspective}. For instance, a special set of $\h(3,q^2)$ such
that all triangles are in perspective is classical (see \cite[Corollary 5.3]{BambergMonzilloSiciliano}, \cite[Theorem 3.1]{Cossidente:2006aa} or \cite[Theorem 6.4]{Thas:2011aa}). If we fix a point $P$ of a special set $\s$, then having any triangle containing $P$ in perspective is 
enough to guarantee that $\s$ is classical \cite[Corollary 2.5]{BambergVandeVoorde}.
Recently, Bamberg and Van de Voorde \cite{BambergVandeVoorde} showed 
that if $\s$ is a special set and there are four non-coplanar points $P,Q_1,Q_2,Q_3\in \s$, such that all triangles $PQ_iR$, $i=1,2,3$, $R\in \s\setminus \{P,Q_i\}$, are in perspective, then $\s$ is 
classical. They also showed that if $\s$ satisfies a condition on Baer sublines of a tangent line,
and a weaker condition on triangles in perspective (namely that there is a point $Q$ such that for every $R\neq P,Q$ in $\s$, the triangle $PQR$ is in perspective), then $\s$ is classical.

In this paper, we give a complete resolution to the problem:

\begin{theorem}\label{maintheorem}
    Let $\mathcal{S}$ be a special set of $\h(3,q^2)$, $q$ odd. Then
    $\mathcal{S}$ is classical; that is, projectively equivalent to the Hermitian Veronesean.
\end{theorem}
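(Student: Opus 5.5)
We reduce Theorem~\ref{maintheorem} to the perspectivity criterion of \cite[Corollary 2.5]{BambergVandeVoorde}: it suffices to fix a point $P\in\s$ and show that every triangle of $\s$ containing $P$ is in perspective. First we normalise coordinates. Since $\PGammaU(4,q^2)$ is transitive on ordered pairs of non-collinear singular points, we take $P=(0,0,0,1)$ and a second point $Q=(1,0,0,0)$ in $\s$. Every other point of $\s$ is non-collinear with both $P$ and $Q$, so it can be written as $(1,x,y,z)$ with $z+z^q=x^{q+1}+y^{q+1}$ and $z\neq 0$. The special set condition (any three points span a nondegenerate plane) then becomes a collection of algebraic conditions on the triples $(x,y,z)$ indexed by the $q^2-1$ remaining points.

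Next we translate these conditions into a functional description. Projecting from $P$ onto the plane $X_3=0$, the points of $\s\setminus\{P\}$ map to $q^2$ points $(1,x,y)$. Requiring that $P,R,R'$ span a nondegenerate plane says that the line $RR'$ is not tangent at $P$, i.e.\ a condition on the difference $(x-x',y-y')$. We want to show this forces $\s\setminus\{P\}$ to be parametrised by $x\mapsto (x,f(x),g(x))$ for a bijective structure in which $f$ behaves like $x\mapsto \lambda x^q$, via a direction-type or polynomial argument over $\F_{q^2}$ (R\'edei-type counting of directions determined by a graph of size $q^2$). The conditions coming from triples not containing $P$ should then pin down $g$ up to the Hermitian constraint.

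The key step is to express ``triangle $PRR'$ in perspective'' in these coordinates. We expect it to be equivalent to the vanishing of a certain trace expression $\Tr(\cdot)$ in $x-x'$ and $y-y'$. We would then show, using the nondegeneracy of all triangles $RR'R''$ together with $q$ odd, that this expression vanishes identically. The plan is to sum the nondegeneracy determinants over $R''$, a character-sum style argument, or to count the points of $\h(3,q^2)$ collinear with exactly two points of $\s$ in two ways, in order to force equality in an inequality.

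This step is the main obstacle: extracting a perspectivity condition, which is global, from the purely local nondegeneracy condition. It is exactly where previous work needed extra hypotheses. What we would try first is to fix $P$ and $R$, and consider the $q^2-1$ planes $PRR'$. For each such plane, the intersection with $\h(3,q^2)$ is a Hermitian curve. Counting incidences of the points of $\s$ with the tangent lines at $P$ should yield a quadratic identity in the number of non-perspective triangles whose only solution is zero. Once every triangle on $P$ is shown to be in perspective, \cite[Corollary 2.5]{BambergVandeVoorde} gives that $\s$ is classical.
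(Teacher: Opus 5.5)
Your reduction to \cite[Corollary 2.5]{BambergVandeVoorde} matches the paper's, and so does your normalisation: $P=(0,0,0,1)$, $Q=(1,0,0,0)$, and all other points written as $(1,x,y,z)$. But the proposal stops where the proof has to begin. The claim that every triangle $PRR'$ of $\s$ is in perspective is never established, and the strategies you list would not deliver it as described. Your own nondegeneracy condition for $\langle P,R,R'\rangle$ is just $\mathrm{N}(x-x')+\mathrm{N}(y-y')\neq 0$. So the $q^2$ projected points only avoid the $q+1$ directions of one Baer subline at infinity. They may still determine up to $q^2-q\geq (q^2+3)/2$ directions, which is outside the range where R\'edei-type theorems force any linearity. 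Moreover, showing that the projection looks like $x\mapsto\lambda x^q$ is essentially the whole classification, not a preliminary step. Counting points of $\h(3,q^2)$ collinear with exactly two points of $\s$ cannot give a tight inequality either. That number, $(q^2+1)(q+1)q^2/2$, is forced by the definition and is the same for every special set. Finally, ``in perspective'' is not a condition on $x-x'$ and $y-y'$ alone; it also involves $z$ and $z'$.

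The missing ideas are these. First, the Segre-invariant criterion \cite[Proposition 5.1]{BambergMonzilloSiciliano}: pairwise non-collinear $A,B,C$ are in perspective iff $h(A,B)h(B,C)h(C,A)\in\F_q$. In your coordinates $h(P,R)=h(R,P)=1$, so $PRR'$ is in perspective iff $h(R,R')\in\F_q$, i.e.\ iff $h(R,R')=h(R',R)$. Second, a Segre/Wilson-type product argument along totally isotropic lines, which turns the $\{0,2\}$-property into this identity.

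Take $R_u,R_v\in\s$ and a singular point $Y\in R_u^\perp\cap R_v^\perp$. Each point $Y+zR_u$, $z\in\F_{q^2}$, is collinear with exactly one point of $\s\setminus\{R_u\}$; for $z=0$ that point is $R_v$. Two points $R_x,R_y$ cannot share such a point, since then $R_u,R_x,R_y$ would lie in a tangent plane. Hence $R_x\mapsto z_x=-h(Y,R_x)/h(R_u,R_x)$ is a bijection from $\s\setminus\{R_u,R_v\}$ onto $\F_{q^2}^*$.

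Take the product over all $R_x$. Use $\prod_{z\in\F_{q^2}^*}z=-1$ and, with your normalised representatives, $\prod_{R\in\s\setminus\{R_u\}}h(R_u,R)=1$. The latter comes from the same trick on a totally isotropic line through $P$: if $Y_u$ is its point collinear with $R_u$, then $h(Y_u,R_x)$ runs over $\F_{q^2}^*$. One gets $\prod_{R\in\s\setminus\{R_u,R_v\}}h(Y,R)=-1/h(R_u,R_v)$. The left-hand side is symmetric in $u$ and $v$, so $h(R_u,R_v)=h(R_v,R_u)\in\F_q$, and every triangle on $P$ is in perspective. Without this argument, or some other concrete mechanism that reaches the Segre invariant, your outline does not prove the theorem.
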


We mention a by-product of Theorem \ref{maintheorem}. The Hermitian Veronesean, as a curve over the algebraic closure $\overline{\F}_{q^2}$,
is (up to projective equivalence) the unique non-planar rational curve of degree $q+1$ (see \cite{Ojiro}). Moreover,
the $\F_{q^2}$-maximal curves, those that attain the Hasse-Weil bound, are (up to birational isomorphism) the curves of degree 
$q+1$ embedded on a smooth Hermitian variety (see \cite{KorchmarosTorres}). In this paper, we have a characterisation of the $\mathcal{V}$ as a special set. See also \cite{LavrauwLiaPavese}.

\section{Setup and proof of the main result}

We introduce the relative trace and norm maps. First, the trace map $\Tr$ from $\F_{q^2}$ to $\F_q$ defined by $\Tr(x):=x+x^q$. The norm map $\mathrm{N}$ also
maps $\F_{q^2}$ to $\F_q$, and it is defined by $\mathrm{N}(x):=x^{q+1}$.
We will also need the following elementary fact about finite fields, which follows from Vieta's formula (since every $x\in F\backslash\{0\}$ is a root of $X^{q^2-1}-1$), but we give the details for completeness.

\begin{lemma} \label{product}
    For $F$ a finite field, we have that $\prod_{x\in F\backslash\{0\}}x=-1$.
\end{lemma}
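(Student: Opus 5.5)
We prove Lemma \ref{product}, the finite-field analogue of Wilson's theorem, by pairing each nonzero element with its inverse. Let $|F|=n$. The multiplicative group $F\backslash\{0\}$ is abelian, so the product may be taken in any order. We partition it into the classes $\{x,x^{-1}\}$.

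A class has size one exactly when $x=x^{-1}$, that is, when $x^2=1$. Since $F$ is a field, the polynomial $X^2-1=(X-1)(X+1)$ has only the roots $1$ and $-1$. In every other class the two elements multiply to $1$. Hence the whole product reduces to the product of the self-inverse elements. In odd characteristic these are $1$ and $-1$, which are distinct, and their product is $-1$. In characteristic $2$ we have $1=-1$, so the only self-inverse element is $1$, and the product is $1=-1$. Either way the product equals $-1$.

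As a cross-check, there is the route the paper alludes to via Vieta's formulas. Every $x\in F\backslash\{0\}$ is a root of $X^{n-1}-1$. This polynomial has degree $n-1$ and exactly $n-1$ distinct roots in $F$, so
\[
X^{n-1}-1=\prod_{x\in F\backslash\{0\}}(X-x).
\]
Comparing constant terms gives $(-1)^{n-1}\prod x=-1$, so $\prod x=(-1)^{n}$. This equals $-1$ when $n$ is odd, and also when $n$ is even, because then the characteristic is $2$ and $1=-1$. The only point needing care in either argument is the characteristic-$2$ case, where $\pm1$ coincide. There is no real obstacle, and I would present the pairing argument as the main proof.
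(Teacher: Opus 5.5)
Your pairing argument is correct and is the paper's proof: each element cancels against its inverse, which leaves only the roots of $X^2-1$. Your separate treatment of characteristic $2$, where $1=-1$ is the only self-inverse element, is slightly more careful than the paper's line $1\times -1=-1$. Your Vieta cross-check matches the alternative the paper mentions just before the lemma.
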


\begin{proof}
    Since $F$ is a field, every non-zero element has a unique multiplicative inverse. Thus, if we multiply all non-zero elements, all elements will cancel except those that are self inverse, that is, those satisfying $x^2=1$. The only elements that satisfy this in any finite field are $1,-1$, so we have that 
    \[
        \prod_{x\in F\backslash\{0\}}x=1 \times -1=-1
    \]
    as required.
\end{proof}

Let $\mathcal{S}$ be a special set. Since the collineation group $\mathrm{P\Gamma U}(4,q)$ of $\h(3,q^2)$ acts transitively on pairs of non-collinear points, without loss of generality we can assume the points $P=(0,0,0,1)$ and $Q=(1,0,0,0)$ are in $\mathcal{S}$. 
Let $\ell$ be the totally isotropic line through the points $P$ and $X=(0,1,\omega,0)$, where $\mathrm{N}(\omega)=-1$. Then the points on $\ell$ other than $P$ have the form $(0,1,\omega,2t^q)$ for $t\in \mathbb{F}_{q^2}$. By \cite[Corollary 3.3]{BambergVandeVoorde}, the points of $\mathcal{S}$ other than $P$ have the form:
\begin{equation}
    R_t=\left(1,f(t)+t,(f(t)-t)\omega,c_t\right)
\end{equation}
where $f:\F_{q^2}\to \F_{q^2}$ is some function, and $\Tr(c_t)=2\Tr(f(t)t^q)$. We have that $R_0=Q$ as $Q$ and $X$ are collinear, and so we have that $f(0)=0$ and $c_0=0$ \cite[Corollary 3.4]{BambergVandeVoorde}. We also define $R_\infty:=P$ so that every point in $\mathcal{S}$ can be represented as $R_t$ for $t\in\mathbb{F}_{q^2}\cup\{\infty\}$. We will also let $h_{uv}:=h(R_u,R_v)$ for $u,v\in\mathbb{F}_{q^2}\cup\{\infty\}$. 

\begin{proposition}\label{product_is_one}
    For each $u\in\mathbb{F}_{q^2}\cup\{\infty\}$, we have that $\prod_{x\ne u}h_{ux}=1$.
\end{proposition}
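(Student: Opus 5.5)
The plan is to split into the case $u=\infty$, which is immediate, and the case $u\in\F_{q^2}$, which I would reduce to $u=0$ and then handle by a counting/bijection argument ending in Lemma~\ref{product}. The guiding example is the classical set: with $R_t=(1,t,t^q,t^{q+1})$ one gets $h(R_u,R_x)=\mathrm{N}(x-u)$. Hence $\prod_{x\neq u}h_{ux}=\mathrm{N}\bigl(\prod_{y\neq 0}y\bigr)=\mathrm{N}(-1)=1$ by Lemma~\ref{product}. The aim is to reproduce this structure in general.

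\emph{Case $u=\infty$.} With $P=(0,0,0,1)$ and $R_x=(1,\dots)$, the form gives $h(P,R_x)=X_3Y_0^q=1$ for every $x\in\F_{q^2}$, so the product is $1$. Symmetrically $h_{u\infty}=h(R_u,P)=1$ for finite $u$. So for $u\in\F_{q^2}$ it remains to show $\prod_{x\in\F_{q^2}\setminus\{u\}}h_{ux}=1$.

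\emph{Case $u$ finite.} Using $\mathrm{N}(\omega)=-1$, expand
\[
h_{ux}=c_u+c_x^q-(f(u)+u)(f(x)+x)^q+(f(u)-u)(f(x)-x)^q=c_u+c_x^q-2f(u)x^q-2u f(x)^q .
\]
I would first treat $u=0$, where $f(0)=c_0=0$ gives $h_{0x}=c_x^q$. The statement then becomes $\prod_{x\neq0}c_x=1$. To prove it, I would use the defining property of the parametrisation from \cite[Corollary 3.3]{BambergVandeVoorde}: $R_t$ is the unique point of $\s$ collinear with the point $(0,1,\omega,2t^q)$ of $\ell$. Combined with the special-set condition (each singular point off $\s$ sees $0$ or $2$ points of $\s$) applied to points on the lines $QR_t$ and $PR_t$, the aim is to show that $x\mapsto c_x$ (or a suitable normalisation of it) is controlled by a bijection of $\F_{q^2}^*$. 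Concretely, I would aim to write $c_x=\mathrm{N}(g(x))$ or $c_x=-\,a(x)b(x)$ with $a,b$ permutations of $\F_{q^2}^*$. Then Lemma~\ref{product} applied once or twice gives $(-1)\cdot(-1)=1$, exactly as in the classical computation.

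For general finite $u$, I would reduce to $u=0$ via the stabiliser of $P$ in $\PGammaU(4,q)$. It contains elements fixing $P$ and mapping $R_u$ to $Q$, acting linearly (up to a field automorphism) on the vectors. Such an element $g$ sends $R_x$ to $\lambda_x R'_{x'}$, where $R'$ is the normalised parametrisation of the image special set; the normalisation is preserved because first coordinates are rescaled to $1$, and $x\mapsto x'$ is a bijection. Then $h_{ux}$ changes by the factor $\mu\lambda_u\lambda_x^q$, where $\mu$ is the similitude factor. I would check that the product of these correction factors over $x\neq u$ equals $1$, using the fact that $h_{u\infty}=h_{x\infty}=1$ is preserved, which pins down $\lambda_x$. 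Then apply the $u=0$ case to the image set.

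The main obstacle is the $u=0$ step: extracting from the special-set axiom enough information on $c_x$ to compute $\prod_{x\ne0}c_x$. The constraint $\Tr(c_t)=2\Tr(f(t)t^q)$ alone is too weak. What I would try first is to express the collinearity of $Q$, $R_x$ and points of $\ell$ as a Hermitian norm equation in $x$. That would realise $c_x$ as a norm of a bijective function of $x$, mimicking $\mathrm{N}(x)$ in the classical case.
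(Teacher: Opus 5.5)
\textbf{There is a genuine gap.} Several parts of your proposal are correct: the case $u=\infty$, your expansion of $h_{ux}$, and $h_{0x}=c_x^q$. Your reduction to $u=0$ is also valid, since $h(P,R_x)=1$ forces every $\lambda_x$ to equal one constant $\lambda$, so the correction factor is $(\mu/\mathrm{N}(\lambda))^{q^2-1}=1$. It is, however, unnecessary. The problem is that the proposal stops where the proof has to happen: $\prod_{x\neq 0}c_x=1$ is stated only as an aim.

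The routes you suggest for it would not work. The lines $QR_t$ and $PR_t$ join two non-collinear points of $\s$, so they are hyperbolic lines, not totally isotropic ones. Their remaining singular points are collinear with neither endpoint, so the $0$-or-$2$ condition there gives you nothing usable. Realising $c_x$ as a norm would give $[P,Q,R_x]=c_x^q\in\F_q$, i.e.\ every triangle $PQR_x$ is in perspective. That is essentially Theorem~\ref{inFq}, which the paper deduces \emph{from} this proposition, so it is not a feasible intermediate step.

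The missing idea is to work on the totally isotropic line through $R_u$ that meets $\ell$, namely $\ell_u=\langle Y_u,R_u\rangle$ with $Y_u=(0,1,\omega,2u^q)$. For $u=0$ this is the line $QX$, whose points are $(z,1,\omega,0)$.
\begin{itemize}
\item Each $R_x$, $x\in\F_{q^2}\setminus\{u\}$, is collinear with a unique point $Y_u+z_xR_u$ of $\ell_u$.
\item $z_x\neq 0$, since otherwise $P,R_u,R_x$ would lie in the tangent plane $Y_u^\perp$.
\item The $z_x$ are pairwise distinct, since otherwise $R_u,R_x,R_y$ would lie in a tangent plane $Z^\perp$.
\item Hence $x\mapsto z_x$ is a bijection onto $\F_{q^2}^*$.
\end{itemize}
Expanding $0=h(Y_u+z_xR_u,R_x)$ with $\mathrm{N}(\omega)=-1$ gives $h(Y_u,R_x)=2(u-x)^q$, and hence
\[
h_{ux}=\frac{2(x-u)^q}{z_x}\qquad\bigl(\text{for } u=0:\ c_x^q=2x^q/z_x\bigr).
\]
This is exactly the ``product of two permutations of $\F_{q^2}^*$'' shape you were hoping for. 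Lemma~\ref{product}, applied to the numerator and to the denominator, then gives $(-1)/(-1)=1$ directly for every finite $u$.
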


\begin{proof}
    If $u=\infty$, then $h_{ux}=h(P,R_x)=1$, and so the statement is true. Thus, consider $u\in\mathbb{F}_{q^2}$. If $x=\infty$, then $h_{ux}=1$, so we are only interested in the case $x\in\mathbb{F}_{q^2}\backslash \{u\}$. Then, the point on $\ell$ collinear with $R_u$ is $Y_u=(0,1,\omega,2u^q)$. Consider the totally isotropic line $\ell_u:=\langle Y_u,R_u\rangle$. 
    
    Since $\s$ is a special set, there is a unique point on $\ell_u$ collinear with $R_x$ for each $x\in\mathbb{F}_{q^2}\backslash\{u\}$. Let this point be $Y_u+z_xR_u$. The $z_x$ are nonzero and distinct as we shall verify.
    First, $z_x\ne 0$. If $z_x=0$, then the corresponding point of $\langle Y_u,R_u\rangle$ is $Y_u$, so $Y_u$ is collinear with $R_x$. But $Y_u$ is already collinear with $P=R_\infty$ and $R_u$. Hence the three points $P, R_u, R_x$ all lie in the tangent plane $Y_u^\perp$. They therefore span a degenerate plane, contradicting the assumption that $\s$ is special. Second, the values $z_x$ are pairwise distinct. Suppose that $z_x=z_y$ for some distinct $x,y\in\F_{q^2}\setminus\{u\}$. Then the same point $Z\in \ell_u$ is collinear with both $R_x$ and $R_y$. Since $Z\in \ell_u$, it is also collinear with $R_u$. Thus $R_u, R_x, R_y$ all lie in the tangent plane $Z^\perp$, and hence span a degenerate plane. This again contradicts that $\s$ is a special set. Therefore the $q^2-1$ values $z_x$, $x\in\F_{q^2}\setminus\{u\}$ are $q^2-1$ distinct nonzero elements of $\F_{q^2}$. 
    
    Since $Y_u+z_xR_u$ and $R_x$ are collinear, we have that:
    \begin{align*}
        0&=h(Y_u+z_xR_u,R_x) = h(Y_u,R_x)+z_xh_{ux} 
%        \implies h_{ux}&=-\frac{h(Y_u,R_x)}{z_x}.
    \end{align*}
    and hence 
    \[
    h_{ux}=-\frac{h(Y_u,R_x)}{z_x}.
    \]
    We can then explicitly calculate $h(Y_u,R_x)$:
    \begin{align*}
        h(Y_u,R_x)&=2u^q-(f(x)+x)^q-\mathrm{N}(\omega)(f(x)-x)^q \\
        &=2u^q-2x^q \\
        &=2(u-x)^q.
    \end{align*}
    Then, the product of all the $h_{ux}$ is
    \begin{align*}
        \prod_{u\ne x}h_{ux}&=\prod_{u\ne x}\frac{2(x-u)^q}{z_x}.
    \end{align*}
    As mentioned earlier, the set of values of $z_x$ is equal to $\mathbb{F}_{q^2}^*$, and since $x\ne u$, we have that $2(x-u)^q$ is a bijection into $\mathbb{F}_{q^2}^*$. Thus, the fraction will cancel, and we get $\prod_{u\ne x}h_{ux}=1$ as required.
\end{proof}
% \john{All we need is for $h(Y_u,R_x)$ to be a bijection (in the variable $x$) into $\F_{q^2}^*$, and then we don't need the notational overhead. This is the only place that the shape of the $R_t$ is used. But, we would essentially be proving \cite[Corollary 3.3]{BambergVandeVoorde} anyway, so yeh, let's keep it as is.}

Thas \cite{Thas:2011aa} proved that a set of $q^2+1$ points $\s$ in $\h(3,q^2)$ such that all triangles are in perspective is equivalent to the Hermitian Veronesean. Indeed, we only need all triangles on a fixed point $P$
of $\s$ to be in perspective for the same result \cite[Corollary 2.5]{BambergVandeVoorde}.
Thus, in order to prove Theorem \ref{maintheorem}, we need to show that all triangles through a point $P$ of a special set are in perspective. We will make use of a result by Bamberg, Monzillo and Siciliano \cite{BambergMonzilloSiciliano} that three non-collinear points $A,B,C$ of $\h(3,q^2)$ are in perspective if and only if $[A,B,C]:=h(A,B)h(B,C)h(C,A)\in\F_q$. The quantity $[A,B,C]$ is known as the \emph{Segre-invariant} of the ordered triple of points $(A,B,C)$.

\begin{theorem}\label{inFq}
    For all $u,v\in\F_{q^2}\cup\{\infty\}$, we have $h_{uv}=h_{vu}$, and hence $h_{uv}\in \F_q$. 
\end{theorem}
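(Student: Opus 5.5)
The plan is to exploit that Proposition~\ref{product_is_one} does not depend on which point of $\s$ plays the role of $P$. Once $P$ is fixed, the vectors $R_x$ are exactly the representatives normalised by $h(R_x,P)=1$: the first coordinate equals $h(R_x,P)$ for the given form. The second claim of the theorem is immediate from the first. Since $h$ is Hermitian, $h_{vu}=h_{uv}^q$, so $h_{uv}=h_{vu}$ forces $h_{uv}^q=h_{uv}$, i.e.\ $h_{uv}\in\F_q$. So everything reduces to proving the symmetry $h_{uv}=h_{vu}$. The case where $u$ or $v$ equals $\infty$ is trivial, since then $h_{uv}=h_{vu}=1$.

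\textbf{Step 1 (reformulate the Proposition intrinsically).} Let $A\in\s$ be any point. For $X\in\s\setminus\{A\}$, let $\widehat X$ denote the unique vector representative with $h(\widehat X,A)=1$; for $A$ itself, fix any vector representative $\widehat A$. The claim to establish is
\[
\prod_{X\in \s\setminus\{B\}} h(\widehat B,\widehat X)=1 \qquad \text{for every } B\in\s .
\]
To obtain this, apply a linear isometry $g$ of $h$ taking $A$ to $(0,0,0,1)$ and a second point of $\s$ to $(1,0,0,0)$. Here I use that the unitary group is transitive on ordered pairs of non-collinear points. An isometry preserves $h$ exactly, so it carries $\widehat X$ to the vectors with first coordinate $1$, which are the $R_t$ of the paper. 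I must check that the image of $\widehat A$ can be taken to be exactly $(0,0,0,1)$: rescale $\widehat A$ so that $h(\widehat Q,\widehat A)=1$, and note that $h(\widehat X,\widehat A)=1$ then holds for all $X$. Proposition~\ref{product_is_one} then yields the displayed identity. I expect this transfer to be the main point needing care, namely that the normalisation is genuinely canonical and preserved.

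\textbf{Step 2 (renormalise at $R_v$).} Fix $v,w\in\F_{q^2}$ with $v\neq w$. Take $A=R_v$ and use the original representatives $R_x$ as the reference. The new representatives are then
\[
R'_x=R_x/h_{xv}\ \ (x\ne v,\infty), \qquad R'_\infty=P,
\]
since $h(P,R_v)=1$. Apply Step~1 with $B=R_w$. For $x\neq w,v,\infty$ we have $h(R'_w,R'_x)=h_{wx}/(h_{wv}h_{xv}^q)$, and $h(R'_w,P)=1/h_{wv}$. The identity becomes
\[
\Bigl(\prod_{x\ne w,v,\infty}h_{wx}\Bigr)\, h_{wv}^{-(q^2-1)}\Bigl(\prod_{x\ne w,v,\infty}h_{xv}^{q}\Bigr)^{-1}=1 .
\]

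\textbf{Step 3 (simplify using the original Proposition).} From Proposition~\ref{product_is_one} at $u=w$, the first product equals $h_{wv}^{-1}$. For the third product, use $h_{xv}^q=h_{vx}^{q^2}=h_{vx}$ together with the Proposition at $u=v$; this gives that the product equals $h_{vw}^{-1}$. Substituting, the identity becomes $h_{wv}^{-q^2}h_{vw}=1$. Since $h_{wv}^{q^2}=h_{wv}$, this gives $h_{vw}=h_{wv}$, as required. The only genuine obstacle is making Step~1 rigorous; Steps~2 and~3 are bookkeeping with the Hermitian relation $h(Y,X)=h(X,Y)^q$.
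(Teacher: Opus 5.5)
Your proof is correct, and it takes a genuinely different route from the paper's.

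\textbf{The paper's route.} For the pair $R_u,R_v$ it builds a new configuration: a singular point $Y\in R_u^\perp\cap R_v^\perp$ and the two totally isotropic lines $\langle Y,R_u\rangle$ and $\langle Y,R_v\rangle$. Running the Wilson-type count (Lemma~\ref{product}) along each line and feeding in Proposition~\ref{product_is_one} evaluates $\prod_{x\ne u,v}h(Y,R_x)$ twice, once as $-1/h_{uv}$ and once as $-1/h_{vu}$.

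\textbf{Your route.} You use no further geometry. You observe that Proposition~\ref{product_is_one} does not depend on which point of $\s$ is the base point, move it to base point $R_v$, and symmetry then follows by bookkeeping. In effect, take arbitrary representatives $\widetilde X$ and set $\Pi_X:=\prod_{Z\in\s\setminus\{X\}}h(\widetilde X,\widetilde Z)$. The base-point-$A$ version of the Proposition is then equivalent to $\Pi_A/h(\widetilde A,\widetilde B)=\Pi_B/h(\widetilde B,\widetilde A)$. Since $\Pi\equiv 1$ in the $P$-normalisation, this forces $h_{vw}=h_{wv}$. Steps 2 and 3 check out, including the $q^2-1$ factors of $h_{wv}^{-1}$, whose product is in fact already $1$.

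\textbf{A fix in Step 1.} Your instruction to rescale $\widehat A$ so that $h(\widehat Q,\widehat A)=1$ is circular, because that equation is how $\widehat Q$ was defined. What you need is one of the following:
\begin{itemize}
\item Witt's theorem, applied to the hyperbolic pair $(\widehat A,\widehat Q)$, gives a linear isometry sending it to $((0,0,0,1),(1,0,0,0))$ exactly as vectors.
\item Alternatively, the product is unchanged under $\widehat A\mapsto\lambda\widehat A$. Each of the $q^2-1$ affected factors is multiplied by $\lambda^{-(q+1)}$, and $\lambda^{(q+1)(q^2-1)}=1$.
\end{itemize}
You could even skip the transfer by an isometry altogether. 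The proof of Proposition~\ref{product_is_one} works verbatim for any base point and any totally isotropic line through it, since along such a line $h(Y_u,\widehat X)$ still runs over $\F_{q^2}^*$.

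\textbf{What each approach buys.} The paper's argument stays inside one fixed normalisation, but it needs a second geometric application of Lemma~\ref{product}. Yours treats Proposition~\ref{product_is_one} as a black box, and it shows that $h_{uv}=h_{vu}$ is a formal consequence of the Proposition's invariance under change of base point.
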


\begin{proof}
    If $u=\infty$ or $v=\infty$, then $h_{uv}=h_{vu}=1$ as required. Thus, consider $u,v\in \F_{q^2}$, and $R_u,R_v\in\mathcal{S}$. The polar image $R_u^\perp$ of $R_u$ meets the polar image $R_v^\perp$ of $R_v$ in 
    a line intersecting $\h(3,q^2)$ in a Baer subline -- $q+1$ points. Let $Y$ be one of these points (which necessarily cannot lie in $\s$). 
    Consider the totally isotropic line $\langle Y,R_u\rangle$. Then the points on this line are of the form $Y+zR_u$ for $z\in\F_{q^2}\cup\{\infty\}$. By the $\{0,2\}$-property of special sets, all points on $\langle Y,R_u\rangle\backslash\{R_u\}$ are collinear to a unique point on $\mathcal{S}$. Take $R_x\in\mathcal{S}\backslash\{R_u,R_v\}$, then $R_x$ is collinear with $Y+z_xR_u$ for some $z_x\in\F_{q^2}^*$ as $z_x=0$ implies $R_x=R_v$, and $z_x=\infty$ implies $R_x=R_u$. Then $h(Y+z_xR_u,R_x)=0$, which when rearranged,
    yields $z_x=-\frac{h(Y,R_x)}{h_{ux}}$. Therefore,
    \begin{align*}
    \prod_{x\ne u,v} z_x &= \prod_{x\ne u,v}\left(-\frac{h(Y,R_x)}{h_{ux}}\right).
    \end{align*}
    Since $z_x$ ranges over $\F_{q^2}^*$ (see the proof of Proposition \ref{product_is_one}), by Lemma \ref{product}, we get:
    \begin{align*}
        -1&=h_{uv}\prod_{x\ne u,v}(-h(Y,R_x))\times\prod_{x\ne u}\frac{1}{h_{ux}},
    \end{align*}
    and so by Proposition \ref{product_is_one},
    \[        -\frac{1}{h_{uv}}=\prod_{x\ne u,v}h(Y,R_x).\]
    Note that the negative sign in the product disappears as there are $q^2-1$ factors --- an even number. Then, repeating the argument for the line $\langle Y,R_v\rangle$, we get that 
    \begin{equation*}
        -\frac{1}{h_{vu}}=\prod_{x\ne u,v}h(Y,R_x).
    \end{equation*}
    Thus, $h_{uv}=h_{vu}$ as required. Since $h$ is a Hermitian form, $h_{uv}\in \F_q$.
\end{proof}

Our main result is a corollary of Theorem \ref{inFq}:

\begin{proof}[Proof of Theorem \ref{maintheorem}]
    Let $\s$ be a special set. First, $\PGammaU(4,q)$ is transitive on pairs of non-collinear points (it is a rank 3 group),
    and so without loss of generality, we can assume that $P=(0,0,0,1)$ and $Q=(1,0,0,0)$ are in $\s$. 
    Write all the points of $\s$ as $R_t$ where $t\in \F_{q^2}\cup\{\infty\}$: where $R_0=Q$ and $R_\infty=P$.
    All we need to know is that the first homogeneous coordinate of each $R_t$ ($t\ne \infty$) is equal to 1,
    because then $h(P,R_t)=h(R_t,P)=1$.
    % We take a totally isotropic line $\ell:=\langle P, X\rangle$ where $X=(0,1,\omega,0)$ and $\mathrm{N}(\omega)=-1$. 
    % All
    % the points of $\s$ are of the form $R_t=(1,f(t)+t,(f(t)-t)\omega,c_t)$, and we 
    Let $h_{uv}:=h(R_u,R_v)$ for $u,v\in\mathbb{F}_{q^2}\cup\{\infty\}$. By Theorem \ref{inFq}, we have
    \begin{align*}
    [P,R_u,R_v]&=h(P,R_u)h(R_u,R_v)h(R_v,P)\\
    &=1\cdot h_{uv}\cdot 1 = h_{uv}\\
    &\in \F_q.
    \end{align*}
    By \cite[Proposition 5.1]{BambergMonzilloSiciliano}, $PR_uR_v$ is in perspective. Since this holds for all triangles on $P$,
    we have that $\s$ is classical by \cite[Corollary 2.5]{BambergVandeVoorde}.
\end{proof}

\section{Some remarks on the proof}

The proof was inspired by Segre's proof that every oval of $\PG(2,q)$, $q$ odd, is a conic.
We have already encountered terms that were inspired by Segre's theory.
To begin, the classical special set arises from the field reduction of a conic. Let $\mathcal{C}$ be
a conic defined by a non-singular quadratic form $Q$ on $\PG(2,q^2)$. Then 
field reduction to $\PG(5,q)$ maps the conic to $q^2+1$ lines $\mathcal{L}$ of an elliptic quadric $\mathsf{Q}^-(5,q)$.
Moreover, $\mathcal{L}$ consists of pairwise disjoint lines, any three of which are independent. Alternatively, 
every line that does not lie in $\mathcal{L}$ is concurrent with 0 or 2 elements of $\mathcal{L}$.
The Klein correspondence yields an isomorphism of the generalised quadrangle arising from the singular
points and lines of $\mathsf{Q}^-(5,q)$ with the generalised quadrangle $\h(3,q^2)$.
This isomorphism can be thought of as a duality of incidence structures, so $\mathcal{L}$
is mapped to $q^2+1$ points $\s$ such that every point that does not lie in $\s$ is collinear with 0 or 2 elements of $\s$.
The ternary property of three points $P, Q, R$ of $\s$ being in perspective, is the same concept
as that of the three corresponding points $\hat{P}, \hat{Q}, \hat{R}$ on the conic $\mathcal{C}$: the triangle
formed by the tangent lines at $\hat{P}, \hat{Q}, \hat{R}$, is in perspective from the triangle 
formed by $\hat{P}, \hat{Q}, \hat{R}$. The first step in Segre's theory is to prove that any 3-subset of
points of an oval of $\PG(2,q)$, $q$ odd, is in perspective. Part of that proof uses the clever observation (see Lemma \ref{product}) that the product of the nonzero elements of $\F_q$ is $-1$. The idea to take the product of the $h_{uv}$
in Proposition \ref{product_is_one} stems from M\"uller's proof of Segre's Theorem \cite{Muller}.

\section*{Declaration of generative AI use}

We used ChatGPT-5.5 to check our arguments and it found a simplification of the proof of Theorem \ref{inFq}. 
An earlier argument (by the authors) showed that the function $f$ in the expression of the $R_t$ was $\F_q$-linear (which also
proves Theorem \ref{maintheorem}), and ChatGPT observed that our expression for $h_{uv}$ is also valid for $h_{vu}$ 
and yields a shorter proof.

\section*{Acknowledgements}

The authors are extremely grateful to Geertrui Van de Voorde (The University of Canterbury) for reading a draft of our paper and
giving feedback.

\end{document}